\begin{document}
\newtheorem{thm}{Theorem}
\newtheorem{cor}[thm]{Corollary}
\newtheorem{lem}{Lemma}
\theoremstyle{remark}\newtheorem{rem}{Remark}
\theoremstyle{definition}\newtheorem{defn}{Definition}

\title{Complete convergence of the Hilbert transform}
\author{Sakin Demir}
\author{Sakin Demir\\
Agri Ibrahim Cecen University\\ 
Faculty of Education\\
Department of Basic Education\\
04100 A\u{g}r{\i}, Turkey\\
E-mail: sakin.demir@gmail.com
}


\maketitle


\renewcommand{\thefootnote}{}

\footnote{2020 \emph{Mathematics Subject Classification}: Primary 26D07; Secondary 47A35.}

\footnote{\emph{Key words and phrases}: Discrete Hilbert Transform, Ergodic Hilbert Transform, Inequality, Complete converges.}

\renewcommand{\thefootnote}{\arabic{footnote}}
\setcounter{footnote}{0}
\maketitle
\begin{abstract}Suppose that $\{a_j\}\in \ell^1$, and suppose that for any sequence $(t_n)$ of integers there exits a constant $C_1>0$ such that
$$\sharp\left\{k\in\mathbb{Z}:\sup_{n\geq 1}\left|\sum_{i\in \mathcal{B}_n-t_n}\!\!\!\!\!\!\text{\raisebox{1.9ex}{\scriptsize$\prime$}}\;\frac{a_{k+i}}{i}\right|>\lambda\right\}\\
\leq C_1\sharp\left\{k\in\mathbb{Z}:\sup_{n\geq 1}\left|\sum_{i\in \mathcal{B}_n}\!\!\text{\raisebox{1.9ex}{\scriptsize$\prime$}}\;\frac{a_{k+i}}{i}\right|>\lambda\right\},$$
 for all $\lambda >0$, where $\mathcal{B}_n=\{-n, -(n-1), -(n-2),\dots , n-2, n-1, n\}$. Then there is a constant $C_2>0$ which does not depend on the sequence $\{a_j\}$ such that 
$$\sum_{n=1}^\infty\sharp\left\{k\in\mathbb{Z}:\left|\sum_{i=-n}^{n}\!\!\!\text{\raisebox{1.9ex}{\scriptsize$\prime$}}\;\frac{a_{k+i}}{i}\right|>\lambda\right\}\leq\frac{C_2}{\lambda}\sum_{i=-\infty}^{\infty}|a_i|$$
for all $\lambda>0$.\\
\indent
 Let $(X,\mathscr{B},\mu )$ be a measure space, $\tau :X\to X$ an invertible measure-preserving transformation, and suppose that $f\in L^1(X)$ such that for any sequence $(t_n)$ of integers there exists a constant $C_1>0$ such that
$$\mu\left\{ x: \sup_{n\geq 1}\left|\sum_{i\in \mathcal{B}_n-t_n}\!\!\!\!\!\!\text{\raisebox{1.9ex}{\scriptsize$\prime$}}\;\frac{f(\tau^ix)}{i}\right| >\lambda \right\}\leq C_1\mu\left\{x: \sup_{n\geq 1}\left|\sum_{i\in \mathcal{B}_n}\!\!\text{\raisebox{1.9ex}{\scriptsize$\prime$}}\;\frac{f(\tau^i x)}{i}\right|>\lambda \right\}  $$
for all $\lambda >0$, where $\mathcal{B}_n=\{-n, -(n-1), -(n-2),\dots , n-2, n-1, n\}$. Then
there exists a constant $C_2>0$ which does not depend on $f$ such that

$$\sum_{n=1}^\infty\mu\left\{x:\left|\sum_{i=-n}^{n}\!\!\!\,\text{\raisebox{1.9ex}{\scriptsize$\prime$}}\;\frac{f(\tau^ix)}{i}\right|>\lambda\right\}\leq\frac{C_2}{\lambda}\|f\|_1$$
for  all $\lambda >0$.
\end{abstract}


\section{Introduction}

Let $(X, \mu )$ be a measure space. A sequence of functions $(f_n)$ defined on $X$ is said to be converge completely to a constant $C$ if
$$\sum_{n=1}^\infty \mu\{x: |f_n(x)-C|>\epsilon \}<\infty$$
for all $\epsilon >0$.\\
\noindent
Since we have
\begin{align*}
\mu\left\{x:\left|\sum_{n=1}^\infty f_n(x)-C\right|>\epsilon \right\}&=\mu \bigcup_{n=1}^\infty \left\{x:\left|\sum_{k=1}^nf_k(x)-C\right|>\epsilon \right\}\\
& \leq \sum_{n=1}^\infty \mu\left\{x: \left|\sum_{k=1}^nf_k(x)-C\right|>\epsilon \right\},
\end{align*}
complete convergence of the partial summation
$$\sum_{k=1}^nf_k(x)$$
to $C$ implies the almost everywhere convergence of the series
$$\sum_{n=1}^\infty f_n(x)$$
to $C$.\\
The study of complete convergence first started in probability theory. P. L.~Hsu and H.~Robbins~\cite{hrrh} have proved the following theorem:
\begin{thm}\label{hsur} Let $(X_n)$ be a sequence of independent random variables defined on a probability space $(\Omega, \mathbb{P} )$, with the same distribution function $F(x)$ and such that
$$\int_{-\infty}^{\infty}x\, dF(x)=0,\;\;\; \int_{-\infty}^{\infty}x^2\, dF(x)<\infty .$$
Then, the sequence 
$$Y_n=\frac{1}{n}\sum_{k=1}^nX_k$$
converges to $0$ completely; i.e., the series
$$\sum_{n=1}^\infty \mathbb{P}\{|Y_n|>\epsilon \}$$
converges for every $\epsilon >0$.
\end{thm}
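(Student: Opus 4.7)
The plan is to prove $\sum_{n=1}^\infty \mathbb{P}(|S_n| > n\epsilon) < \infty$, where $S_n = X_1 + \cdots + X_n$; this is exactly what complete convergence of $Y_n = S_n/n$ to $0$ demands. The strategy is truncation of the summands at scale $n$, followed by a fourth-moment Markov estimate that I would analyse \emph{after} summing in $n$ rather than term by term. The only ingredient that uses the second-moment hypothesis directly is the classical Fubini identity $\sum_{n=1}^\infty n\,\mathbb{P}(|X_1| > n) \leq \mathbb{E}[X_1^2]$.

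First I would decompose using $Y_k := X_k \mathbf{1}_{\{|X_k| \leq n\}}$:
\[
\mathbb{P}(|S_n| > n\epsilon) \leq \mathbb{P}\Bigl(\bigcup_{k=1}^n \{|X_k| > n\}\Bigr) + \mathbb{P}\Bigl(\Bigl|\sum_{k=1}^n Y_k\Bigr| > n\epsilon\Bigr).
\]
The first probability is at most $n\,\mathbb{P}(|X_1| > n)$, and the identity above renders the corresponding $n$-sum finite.

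For the truncated sum $S'_n := \sum_{k=1}^n Y_k$, I would recenter by its mean $n m_n$, with $m_n := \mathbb{E} Y_1 = -\mathbb{E}[X_1 \mathbf{1}_{\{|X_1|>n\}}]$; Cauchy--Schwarz gives $|m_n| \leq \sigma^2/n$, so $|n m_n| = O(1)$ and, for $n$ large, $\{|S'_n| > n\epsilon\} \subseteq \{|S'_n - n m_n| > n\epsilon/2\}$. By independence and the zero-mean centering, only the ``all-diagonal'' and the ``two-pair'' contributions survive in the expansion of the fourth moment, yielding
\[
\mathbb{E}\bigl[(S'_n - n m_n)^4\bigr] \leq C\bigl(n\,\mathbb{E}[Y_1^4] + n^2 \sigma^4\bigr).
\]
A Markov estimate and summation in $n$ split into two parts: the $n^2\sigma^4/n^4$ term contributes $\sum 1/n^2 < \infty$, and the delicate piece is handled by interchanging summation with expectation,
\[
\sum_{n=1}^\infty \frac{\mathbb{E}[X_1^4 \mathbf{1}_{\{|X_1|\leq n\}}]}{n^3} = \mathbb{E}\Bigl[X_1^4 \sum_{n \geq |X_1|} n^{-3}\Bigr] \leq C\,\mathbb{E}[X_1^2] < \infty.
\]

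The main obstacle to watch for is the naive per-$n$ bound $\mathbb{E}[Y_1^4] \leq n^2 \sigma^2$, which on its own produces only the barely non-summable estimate $\mathbb{P}(\cdot) = O(1/n)$; the escape is precisely to keep $\mathbb{E}[X_1^4 \mathbf{1}_{\{|X_1|\leq n\}}]$ unevaluated and perform the Fubini swap at the end, where the weight $n^{-3}$ transforms the $X_1^4$ factor into an $X_1^2$ factor controlled by the second-moment hypothesis. Collecting the two contributions gives the desired $\sum_n \mathbb{P}(|Y_n| > \epsilon) < \infty$ for every $\epsilon > 0$.
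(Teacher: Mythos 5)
The paper does not prove this statement at all: Theorem~\ref{hsur} is quoted as background, with the proof deferred to the cited work of Hsu and Robbins, so there is no in-paper argument to compare against. Your proposal is a correct, self-contained proof, and it follows the now-standard elementary route (truncation at level $n$, recentering, fourth-moment Chebyshev bound, Fubini interchange) rather than the characteristic-function computation of the original Hsu--Robbins paper. The individual steps all check out: $\sum_n n\,\mathbb{P}(|X_1|>n)\leq \mathbb{E}[X_1^2]$ handles the union of large-deviation events of the summands; $|m_n|\leq \sigma^2/n$ (Cauchy--Schwarz plus Chebyshev, or directly $|x|\mathbf{1}_{\{|x|>n\}}\leq x^2/n$) makes the recentering harmless for all but finitely many $n$; independence and mean-zero centering reduce the fourth moment of the centered truncated sum to the diagonal and paired terms, giving $C\bigl(n\,\mathbb{E}[Y_1^4]+n^2\sigma^4\bigr)$ up to a constant depending on $\sigma$; and the final swap $\sum_{n}n^{-3}\mathbb{E}[X_1^4\mathbf{1}_{\{|X_1|\leq n\}}]=\mathbb{E}\bigl[X_1^4\sum_{n\geq |X_1|}n^{-3}\bigr]\leq C\,\mathbb{E}[X_1^2]$ is exactly the point where a finite second moment suffices, as you correctly emphasize against the naive per-$n$ bound. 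Two cosmetic remarks: your truncated variables $Y_k$ clash notationally with the $Y_n$ of the statement (they form a triangular array depending on $n$, which is fine but worth saying), and the finitely many small $n$ excluded by the recentering step should be dismissed explicitly by bounding their probabilities by $1$. What your approach buys over the original is elementary bookkeeping with moments and no Fourier analysis, at the cost of being specific to this second-moment setting rather than yielding the sharper rate results of the later Baum--Katz type theorems the paper also cites.
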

\noindent
P.~Erd\"os~ has first proved the converse of Theorem~\ref{hsur} with some restriction in \cite{pe}, and in a  little while he has come up with a proof  with no restriction in \cite{pe2}.\\
Together with its converse Theorem~\ref{hsur} is known as Hsu-Robbins-Erd\"os Theorem. Later complete convergence  has been studied with different type of settings  by some other authors in probability theory, see, for example,  L. E.~Baum and M.~Katz~\cite{bkatz}. Our goal is to carry the  notion of complete convergence to harmonic analysis and ergodic theory. We prove that the discrete and ergodic Hilbert transforms completely converge to 0.  Note that those transfer principles used to transfer the results for discrete averages to the continuous case can be used to find the analogues of our results for continuous Hilbert transform both on the real line and on a dynamical system equipped with an ergodic measure preserving flow  since the Hilbert transform is an operator of convolution type.

\section{Preliminaries}
Let $(X,\mathscr{B},\mu )$ be a measure space, $\tau :X\to X$ an invertible measure-preserving transformation. The ergodic Hilbert transform of a measurable function $f$, is defined as
$$Hf(x)=\lim_{n\to\infty}\sum_{k=-n}^{n}\:\!\!\!\!\text{\raisebox{1.9ex}{\scriptsize$\prime$}}\;\frac{f(\tau^kx)}{k}.$$
The prime denotes that the term with zero denominator is omitted in the summation.\\
\noindent
It is well known that $Hf$ is of weak type $(p,p)$ for $1\leq p<\infty$, and of strong type $(p,p)$ for $1<p<\infty$. There are several different methods in the literature  to see these facts. The most immediate one is to transfer the same inequalities for the Hilbert transform on $\mathbb{R}$ by Calder\'on transfer principle as in the relation between the Hardy-Littlewood maximal function and the ergodic maximal function.\\
\noindent
For $\{a_j\}\in l^1$ the Hilbert transform on $\mathbb{Z}$ is defined by
$$\mathcal{H}a(k)=\lim_{n\to\infty}\sum_{i=-n}^{n}\:\!\!\!\!\text{\raisebox{1.9ex}{\scriptsize$\prime$}}\;\frac{a_{k+i}}{i}.$$
Our main goal of this research is to prove the following:\\
Suppose that $\{a_j\}\in l^1$ has finite support. Then we prove that there is a constant $C$ such that 
$$\sum_{n=1}^\infty\sharp\left\{k\in\mathbb{Z}:\left|\sum_{i=-n}^{n}\!\!\!\text{\raisebox{1.9ex}{\scriptsize$\prime$}}\;\frac{a_{k+i}}{i}\right|>\lambda\right\}\leq\frac{C}{\lambda}\sum_{i=-\infty}^{\infty}|a_i|$$
for all $\lambda>0$. Then it will be clear by means of a transference argument that the same type of inequality for the ergodic Hilbert transform also remains true.
The following lemmas are due to  L. H. Loomis~\cite{lhloom}, who rediscovered an idea that essentially goes back to  G. Boole~\cite{gboole}. We give the proofs of them for completeness:
\begin{lem}\label{weakinlemgs} Let $a_1,a_2,\dots ,a_n\geq 0$ and $g(s)=\sum_{i=1}^n\frac{a_i}{s-t_i}$. Then
$$m\{s:g(s)>\lambda\}=m\{s:g(s)<-\lambda\}=\frac{1}{\lambda}\sum_{i=1}^na_i,$$
where $m$ denotes the Lebesgue measure on $\mathbb{R}$.
\end{lem}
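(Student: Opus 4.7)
The plan is to exploit the structure of $g$ as a sum of simple rational functions with positive coefficients and convert the computation of $m\{g>\lambda\}$ into the evaluation of the sum of roots of a single polynomial equation, to which Vieta's formulas apply directly.

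First I would reduce to the case where the $t_i$ are distinct and all $a_i>0$, merging or discarding terms as necessary, and order them as $t_1<t_2<\cdots<t_n$. Direct differentiation gives $g'(s)=-\sum_i a_i/(s-t_i)^2<0$, so $g$ is strictly decreasing on each of the $n+1$ open intervals determined by the poles. The boundary behavior is transparent: $g\to+\infty$ as $s\to t_i^+$, $g\to-\infty$ as $s\to t_i^-$, and $g\to 0$ as $s\to\pm\infty$. Combining monotonicity with this behavior, for fixed $\lambda>0$ the equation $g(s)=\lambda$ has exactly one root $s_i$ in each of $(t_1,t_2),\dots,(t_{n-1},t_n),(t_n,\infty)$ and no root in $(-\infty,t_1)$, so that
\[
\{s:g(s)>\lambda\}=\bigcup_{i=1}^{n}(t_i,s_i)\quad\text{and}\quad m\{s:g(s)>\lambda\}=\sum_{i=1}^n(s_i-t_i),
\]
the union being disjoint by strict monotonicity on each piece.

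The key step is then to evaluate $\sum_{i=1}^n s_i$. Clearing denominators in $g(s)=\lambda$ yields the polynomial identity
\[
\lambda\prod_{j=1}^n(s-t_j)-\sum_{i=1}^n a_i\prod_{j\ne i}(s-t_j)=0,
\]
which has degree $n$, leading coefficient $\lambda$, and $s^{n-1}$-coefficient $-\lambda\sum_j t_j-\sum_i a_i$. Vieta's formulas give $\sum_{i=1}^n s_i=\sum_j t_j+\tfrac{1}{\lambda}\sum_i a_i$, from which $m\{g>\lambda\}=\tfrac{1}{\lambda}\sum_i a_i$ is immediate. The case $\{g<-\lambda\}$ is handled identically: the roots of $g(s)=-\lambda$ now sit one in each of $(-\infty,t_1),(t_1,t_2),\dots,(t_{n-1},t_n)$, the sub-level set is the disjoint union of the corresponding pieces running from each root up to the next pole, and applying Vieta's with $-\lambda$ in place of $\lambda$ yields the sum of roots $\sum_j t_j-\tfrac{1}{\lambda}\sum_i a_i$, giving the same measure $\tfrac{1}{\lambda}\sum_i a_i$.

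The only nontrivial ingredient is the Vieta calculation; the monotonicity argument and the interval decomposition are routine. The main obstacle, modest as it is, is the bookkeeping needed to align the roots with the correct interval endpoints so that the telescoping of $\sum(s_i-t_i)$ lines up cleanly with Vieta's identity for the super-level set, and symmetrically for the sub-level set.
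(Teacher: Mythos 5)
Your proof is correct and follows essentially the same route as the paper's: monotonicity of $g$ between poles to locate exactly $n$ roots of $g(s)=\lambda$, the interval decomposition of the super-level set, and Vieta's formula applied to the cross-multiplied polynomial to compute the sum of roots, with the symmetric argument for $\{g<-\lambda\}$. No gaps; if anything, your treatment of the negative case and of the unbounded interval $(t_n,\infty)$ is spelled out a bit more carefully than in the paper.
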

\begin{proof} Since $g(t_i-)=-\infty$, $g(t_i+)=\infty$ and $g^{\prime}(s)<0$ for all $s$, there are precisely $n$ points $m_i$ such that $g(m_i)=\lambda$, and $t_i<m-i<t_{i+1}$, $i=1,2,\dots ,n-1,t_n,m_n$. The set where $g(s)>\lambda$ thus consists of the intervals $(t_i,m_i)$ and has total length
\begin{equation}\label{totallength}
\sum_{i=1}^n(m_i-t_i)=\sum_{i=1}^nm_i-\sum_{i=1}^nt_i.
\end{equation}
But the numbers $m_i$ are the roots of the equation
$$\sum_{i=1}^n\frac{a_i}{s-t_i}=\lambda ,$$
whose cross-multiplied form is
$$\sum_{i=1}^na_i\left[\prod_{j\neq i}(s-t_i)\right]=\lambda\prod_{i=1}^n(s-t_i),$$
or
$$\lambda s^n-\left[\lambda\sum t_j+\sum a_i\right]s^{n-1}+\dots =0,$$
so that
\begin{equation}\label{sumofmi}
\sum_{i=1}^nm_i=\sum_{i=1}^nt_i+\frac{1}{\lambda}\sum_{i=1}^na_i.
\end{equation}
The first part of the lemma follows from (\ref{totallength}) and (\ref{sumofmi}); the proof for $g(s)<-\lambda$ is almost identical.
\end{proof}
\begin{lem}\label{weakinleminthil} There is a constant $C$ such that if $\{a_k\}\in \ell^1$ and $\lambda >0$, then
$$\sharp\left\{k\in\mathbb{Z}:\left|\sum_{i=-\infty}^{\infty}\!\!\!\text{\raisebox{1.9ex}{\scriptsize$\prime$}}\;\frac{a_{k+i}}{i}\right|>\lambda\right\}\leq\frac{C}{\lambda}\sum_{i=-\infty}^{\infty}|a_i|.$$
\end{lem}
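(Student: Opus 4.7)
My plan is to realize the discrete Hilbert transform as the sampling of a suitable continuous rational function to which Lemma~\ref{weakinlemgs} applies, control the sampling error by a discrete convolution that is bounded on $\ell^1$, and finally convert the Lebesgue-measure estimate so obtained into a counting estimate on $\mathbb{Z}$. By decomposing $a_j$ into the four nonnegative sequences arising from the positive and negative parts of its real and imaginary components, it suffices to prove the inequality when $a_j \geq 0$; a standard truncation argument further reduces to the case of finitely supported $(a_j)$.

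For such $(a_j)$, I would introduce the continuous function
$$g(s) := \sum_j \frac{a_j}{s - (j - 1/2)}, \qquad s\in\mathbb{R},$$
whose poles lie at the half-integer points $j - 1/2$. Lemma~\ref{weakinlemgs} then gives $m\{s:g(s)>\mu\} = m\{s:g(s)<-\mu\} = \mu^{-1}\sum_j a_j$ for every $\mu>0$. A direct manipulation of the two sums yields the pointwise identity
$$\mathcal{H}a(k) = -g(k) + 2 a_k - \tfrac{1}{2} E(k), \qquad E(k) := \sum_{j\neq k} \frac{a_j}{(k-j+1/2)(k-j)}, \qquad k\in\mathbb{Z}.$$
Since the kernel $m \mapsto 1/[(m+1/2)m]$ (for $m\neq 0$) is absolutely summable, Young's convolution inequality yields $\|E\|_{\ell^1} \leq C_0 \|a\|_{\ell^1}$, and Chebyshev's inequality then controls both the ``diagonal'' term $2a_k$ and the convolution error $\tfrac{1}{2}E(k)$ at level $\lambda/3$ by a multiple of $\|a\|_{\ell^1}/\lambda$.

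The delicate point is to bound $\sharp\{k\in\mathbb{Z}:|g(k)|>\mu\}$ in terms of $m\{s:|g(s)|>\mu\}$. Between two consecutive poles $p_\ell = j_\ell - 1/2$ and $p_{\ell+1} = j_{\ell+1} - 1/2$ the function $g$ is strictly decreasing from $+\infty$ to $-\infty$, so $\{s \in (p_\ell,p_{\ell+1}) : g(s)>\mu\}$ is a single subinterval $(p_\ell, m_\ell)$. If it contains $r_\ell \geq 1$ integers, the largest of them is at least $j_\ell + r_\ell - 1$, whence $m_\ell - p_\ell > r_\ell - 1/2 \geq r_\ell/2$. Summing over components and invoking Lemma~\ref{weakinlemgs} yields $\sharp\{k:g(k)>\mu\} \leq 2\mu^{-1}\|a\|_{\ell^1}$, and the same argument handles the negative side. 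Taking $\mu=\lambda/3$ and collecting the three contributions produces the claimed absolute constant $C$. This final sampling step is the main obstacle, and it explains the half-integer placement of the poles of $g$: every integer present in a super-level component is then at distance at least $1/2$ from the leftmost pole, which is exactly the geometric input that validates the count-versus-measure comparison.
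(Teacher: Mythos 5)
Your proof is correct, but it takes a genuinely different route from the paper's, which follows Loomis's covering argument: there, for each integer $k$ in the bad set one removes the term $i=k$, uses that $g_k(s)=\sum_{i\neq k}a_i/(i-s)$ is increasing on $[k,k+1)$ to place the whole unit interval $(k,k+1)$ inside $\{g_k>\lambda\}\subset\{g>\lambda/2\}\cup\{h_k<-\lambda/2\}$ with $h_k(s)=a_k/(k-s)$, and then applies Lemma~\ref{weakinlemgs} once to the full function and once to each one-pole correction, summing over $k\in A$. You instead keep a single auxiliary function with poles shifted to the half-integers, prove the exact identity $\mathcal{H}a(k)=-g(k)+2a_k-\tfrac12E(k)$ (which checks out, since $\tfrac{1}{j-k}-\tfrac{1}{j-k-1/2}=-\tfrac12\cdot\tfrac{1}{(j-k)(j-k-1/2)}$), dispose of the diagonal term and the absolutely summable error kernel by Chebyshev and Young, and then convert Boole's measure identity for $g$ into a counting bound using that every integer in a super-level (or sub-level) component lies at distance at least $1/2$ from the pole at the component's end, so the number of integers in a component is at most twice its length; the same geometry indeed works on the negative side, where components end (rather than begin) at poles. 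The reduction to nonnegative, finitely supported sequences is legitimate and is in any case needed to apply Lemma~\ref{weakinlemgs}, which is stated for finite sums; the paper performs the analogous limiting step by passing to a finite subset $A$ and a large $N$. What your route buys is an explicit constant and a single application of Lemma~\ref{weakinlemgs} to one fixed function rather than to a family indexed by $k$; what the paper's route buys is that no algebraic identity or error kernel is required, since monotonicity alone turns each bad integer into a unit interval of the continuous super-level set.
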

\begin{proof} By treating the positive and negative ones separately, we may assume that all the $a_i$ are positive. We will count
$$A_{\lambda}=\left\{k:\sum_{i=-\infty}^{\infty}\!\!\!\text{\raisebox{1.9ex}{\scriptsize$\prime$}}\;\frac{a_{k+i}}{i}>\lambda\right\};$$
a similar method will apply to
$$A_{\lambda}^{\prime}=\left\{k:\sum_{i=-\infty}^{\infty}\!\!\!\text{\raisebox{1.9ex}{\scriptsize$\prime$}}\;\frac{a_{k+i}}{i}<-\lambda\right\}.$$
Choose a finite set $A\subset A_{\lambda}$, and choose $N$ so large that $A\subset \left[N,N\right]$ and, for each $k\in A$, 
$$\sum_{i=-N}^N\!\!\!\text{\raisebox{1.9ex}{\scriptsize$\prime$}}\;\frac{a_i}{i-k}>\lambda.$$
Then
$$g_k(s)=\sum_{i=-N}^N\!\!\!\!\:\text{\raisebox{1.9ex}{\scriptsize$\prime$}}\;\frac{a_i}{i-s}>\lambda$$
for $s=k\in A$, and hence $g_k(s)>\lambda$ for $s\in [k,k+1)$, because $g_k^{\prime}(s)>0$. If we let
$$g(s)=\sum_{i=-N}^N\!\!\!\!\:\text{\raisebox{1.9ex}{\scriptsize$\prime$}}\;\frac{a_i}{i-s}>\lambda$$
and
$$h_k(s)=\frac{a_k}{k-s},$$
then $g=g_k+h_k$, so that for each $k\in A$
$$(k,k+1)\subset\left\{s:g_k(s)>\lambda\right\}\subset\left\{s:g(s)>\frac{1}{\lambda}\right\}\cup\left\{s:h_k(s)<-\frac{\lambda}{2}\right\}.$$
Therefore, we get
\begin{align*}
\sharp{A}&=m\left(\bigcup_{k\in A}(k,k+1)\right)\\
&\leq m\left\{s:g(s)>\frac{\lambda}{2}\right\}+\sum_{k\in A}m\left\{s:h_k(s)<-\frac{\lambda}{2}\right\}\\
&\leq\frac{2C}{\lambda}\sum_{i=-N}^Na_i+\sum_{k\in A}\frac{2C}{\lambda}a_k\\
&\leq\frac{4C}{\lambda}\|a\|_1
\end{align*}
as desired.
\end{proof}
\begin{lem}\label{weakinlemintmaxhil} There is a constant $C$ such that if $\{a_k\}\in \ell^1$ and $\lambda >0$, then
$$\sharp\left\{k\in\mathbb{Z}:\sup_{n\geq 1}\left|\sum_{i=-n}^{n}\!\!\!\text{\raisebox{1.9ex}{\scriptsize$\prime$}}\;\frac{a_{k+i}}{i}\right|>\lambda\right\}\leq\frac{C}{\lambda}\sum_{i=-\infty}^{\infty}|a_i|.$$
\end{lem}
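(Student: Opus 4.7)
The plan is to upgrade the weak-type $(1,1)$ bound of Lemma~\ref{weakinleminthil} for $\mathcal Ha$ to one for the maximal operator $H^{*}a(k)=\sup_{n\geq 1}\bigl|\sum_{0<|i|\leq n}a_{k+i}/i\bigr|$. The natural route is a discrete version of Cotlar's pointwise inequality, combined with a Calder\'on--Zygmund decomposition to handle the auxiliary maximal function that arises.

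The first step is to establish the pointwise bound
\[
H^{*}a(k)\;\leq\;C\bigl(M_{2}a(k)+M(|\mathcal Ha|)(k)\bigr),
\]
where $Ma(k)=\sup_{N\geq 1}(2N+1)^{-1}\sum_{|j|\leq N}|a_{k+j}|$ is the discrete Hardy--Littlewood maximal function and $M_{2}a=(M(|a|^{2}))^{1/2}$. For fixed $k$ and $n$, split $a=a_{1}+a_{2}$ with $a_{1}=a\cdot\chi_{[k-2n,k+2n]}$, so that $H_{n}a(k)=\mathcal Ha_{1}(k)$. On the block $B_{k}=\{j:|j-k|\leq n/2\}$, the kernel-difference estimate $|(k-i)^{-1}-(j-i)^{-1}|\leq Cn|k-i|^{-2}$ for $|k-i|\geq 2n$, together with a dyadic decomposition of the tail, yields $|\mathcal Ha_{2}(j)-\mathcal Ha_{2}(k)|\leq C\,Ma(k)$ uniformly in $j\in B_{k}$. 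The local contribution is then handled by Cauchy--Schwarz and the $\ell^{2}$-boundedness of $\mathcal H$ (standard, via Plancherel):
\[
|B_{k}|^{-1}\sum_{j\in B_{k}}|\mathcal Ha_{1}(j)|\leq|B_{k}|^{-1/2}\|\mathcal Ha_{1}\|_{2}\leq C|B_{k}|^{-1/2}\|a_{1}\|_{2}\leq C\,M_{2}a(k).
\]
Averaging the identity $\mathcal Ha(j)=\mathcal Ha_{1}(j)+\mathcal Ha_{2}(j)$ over $j\in B_{k}$ and using $|\mathcal Ha(k)|\leq M(|\mathcal Ha|)(k)$, then taking the supremum over $n$, completes the step.

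Given this pointwise inequality, $\{H^{*}a>\lambda\}$ is contained in $\{M_{2}a>\lambda/(2C)\}\cup\{M(|\mathcal Ha|)>\lambda/(2C)\}$. Neither set is immediately of size $\leq(C'/\lambda)\|a\|_{1}$, since $M_{2}$ is only of weak type $(2,2)$ and $\mathcal Ha$ is only in weak-$\ell^{1}$, so one performs a Calder\'on--Zygmund decomposition $a=g+b$ at height $\lambda$, with $\|g\|_{\infty}\leq C\lambda$, $\|g\|_{2}^{2}\leq C\lambda\|a\|_{1}$, and $b$ a sum of mean-zero bumps on disjoint intervals of total length $\leq C\|a\|_{1}/\lambda$. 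The good part is dispatched by the $\ell^{2}$-boundedness of $H^{*}$ (itself an $\ell^{2}$-consequence of the Cotlar inequality above) followed by Chebyshev, while the bad part is handled by removing the tripled bad intervals from the exceptional set and exploiting mean-zero cancellation on the complement. The main obstacle is this last piece: the \emph{straddling} configurations, where the window $[k-n,k+n]$ cuts a bad interval in two, destroy the pure mean-zero cancellation and must be absorbed either into a further Cotlar-style reduction or into the uniform H\"ormander condition for the truncated Hilbert kernels, before the standard bookkeeping yields $\sharp\{H^{*}a>\lambda\}\leq(C/\lambda)\|a\|_{1}$.
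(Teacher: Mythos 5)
Your route is viable, but it is genuinely different from the paper's and considerably heavier. The paper never invokes Cotlar's inequality, Plancherel, or a Calder\'on--Zygmund decomposition: it runs the elementary Loomis--Boole argument. Lemma~\ref{weakinlemgs} gives the \emph{exact} measure of $\{s:\sum_i a_i/(s-t_i)>\lambda\}$ via Boole's identity, Lemma~\ref{weakinleminthil} converts this into a weak $(1,1)$ bound for the full discrete transform $\mathcal{H}a$, and the maximal bound then follows from a covering argument: for each $k$ in the level set one fixes a witness interval $I_k=[k-n_k,k+n_k]$ with $\sum_{i\in I_k,\,i\neq k}a_i/(i-k)>\lambda$, splits the rational function $g=g_k+h_k$ (sum over $I_k$ versus its complement), disposes of the $k$'s with $g(k)>\lambda/2$ by Lemma~\ref{weakinleminthil}, and controls the remaining $k$'s by a Vitali-type selection of disjoint $I_k$'s plus another application of the Boole/Loomis estimate to the tails $h_k$ on $(k-n_k,k)$. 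That approach buys a completely elementary proof with explicit constants; yours buys generality, since the Cotlar--CZ machinery would apply verbatim to other discrete Calder\'on--Zygmund kernels. Two spots in your sketch need repair, both standard: (i) with $a_1=a\chi_{[k-2n,k+2n]}$ you get $\mathcal{H}a_1(k)=H_{2n}a(k)$, not $H_na(k)$; take $a_1=a\chi_{[k-n,k+n]}$ (and adjust $B_k$ and the smoothness range accordingly) so that the identity you average is actually correct. (ii) The straddling terms cannot be absorbed into a uniform H\"ormander condition alone --- the linearized kernels $K_{n(k)}$ do not satisfy one uniformly; the standard fix is to observe that if $k$ lies outside the dilated bad intervals, a bad interval $I_j$ cut by the truncation radius satisfies $|I_j|\leq\mathrm{dist}(k,I_j)$, hence lies entirely within distance $2\,\mathrm{dist}(k,I_j)$ of $k$, so its contribution is at most $C\,Mb(k)$; since at most two intervals straddle for each $n$ and $M$ is weak $(1,1)$ with $\|b\|_1\leq C\|a\|_1$, this closes the bad-part estimate. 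With those two repairs your argument goes through, but it is an alternative proof, not a reconstruction of the one in the paper.
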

\begin{proof} We assume as before that all the $a_i$ are positive and drop the absolute value signs. Let
$$A\subset\left\{k:\sup_{n\geq 1}\sum_{i=-n}^{n}\!\!\!\text{\raisebox{1.9ex}{\scriptsize$\prime$}}\;\frac{a_{k+i}}{i}>\lambda\right\}$$
be closed and bounded. For each $k\in A$ there is an interval of integers $I_k=\left[k-n_k,k+n_k\right]$ such that
$$\sum_{i\in I_k}\!\text{\raisebox{1.9ex}{\scriptsize$\prime$}}\;\frac{a_i}{i-k}>\lambda .$$
Let
$$g_k(s)=\sum_{i\in I_k}\!\text{\raisebox{1.9ex}{\scriptsize$\prime$}}\;\frac{a_i}{i-s},\quad g(s)=\sum_{i=-\infty}^{\infty}\!\!\!\text{\raisebox{1.9ex}{\scriptsize$\prime$}}\;\frac{a_i}{i-s},\quad h_k(s)=\sum_{i\notin I_k}\!\text{\raisebox{1.9ex}{\scriptsize$\prime$}}\;\frac{a_i}{i-s}.$$
If $k\in A$, then $g_k(k)>\lambda$, so that either $g(k)>\frac{\lambda}{2}$ or $h_k(k)<-\frac{\lambda}{2}$. In the first case ($k\in A_1$), by Lemma~\ref{weakinleminthil}, $k$ falls into a single (independent of $k$) set of measure no more than $\frac{C}{\lambda}\|a\|_1$. To deal with the left over $k$'s ($k\in A_2$), replace $\{I_k\}$ by a disjoint subfamily which still covers at least $\frac{1}{3}$ of $A_2$, by at each stage selecting an interval of maximal disjoint from the previously chosen ones. Find $N$ such that
$$\bigcup_{k\in A_2}I_k\subset\left[-N,N\right]$$
and
$$\tilde{h}_k(k)\leq-\frac{\lambda}{2}\;\textrm{for all}\;k\in A_2,$$
where
$$\tilde{h}_k(s)=\sum_{i\in\{-N,\dots ,N\}-I_k}\frac{a_i}{i-s}.$$
Then also $\tilde{h}_k(s)<-\frac{\lambda}{2}$ on $(k-n_k,k)$, so that we find
\begin{align*}
\sharp{A_1}&=\sharp{A_2}+\sharp{A_2}\\
&\leq\frac{C}{\lambda}\|a\|_1+6\sum_{k\in A_2}n_k\\
&\leq\frac{C}{\lambda}\|a\|_1+6m\left(\bigcup_{k\in A_2}\left\{s:\tilde{h}_k(s)<-\frac{\lambda}{2}\right\}\right)\\
&\leq\frac{C}{\lambda}\|a\|_1+6m\left(\bigcup_{k\in A_2}\left(\left\{s:\sum_{i=-N}^N\:\!\!\!\!\text{\raisebox{1.9ex}{\scriptsize$\prime$}}\;\frac{a_i}{i-s}<-\frac{\lambda}{4}\right\}\cup\left\{s:g_k(s)>\frac{\lambda}{4}\right\}\right)\right)\\
&\leq\frac{C}{\lambda}\|a\|_1+6m\left\{s:\sum_{i=-N}^N\:\!\!\!\!\text{\raisebox{1.9ex}{\scriptsize$\prime$}}\;\frac{a_i}{i-s}<-\frac{\lambda}{4}\right\}\cup\left\{s:g_k(s)>\frac{\lambda}{4}\right\}+6\sum_{k\in A_2}m\left\{s:g_k(s)>\frac{\lambda}{4}\right\}\\
&\leq\frac{C}{\lambda}\|a\|_1+\frac{24C}{\lambda}\|a\|_1+6\sum_{k\in A_2}\frac{4C}{\lambda}\sum_{i\in I_k}a_i\\
&\leq\frac{49C}{\lambda}\|a\|_1.
\end{align*}
\end{proof}
\section{The Results}
The following is our first result:
\begin{thm}\label{mainthm}Suppose that $\{a_j\}\in \ell^1$, and suppose that for any sequence $(t_n)$ of integers there exits a constant $C_1>0$ such that
$$\sharp\left\{k\in\mathbb{Z}:\sup_{n\geq 1}\left|\sum_{i\in \mathcal{B}_n-t_n}\!\!\!\!\!\!\text{\raisebox{1.9ex}{\scriptsize$\prime$}}\;\frac{a_{k+i}}{i}\right|>\lambda\right\}\\
\leq C_1\sharp\left\{k\in\mathbb{Z}:\sup_{n\geq 1}\left|\sum_{i\in \mathcal{B}_n}\!\!\text{\raisebox{1.9ex}{\scriptsize$\prime$}}\;\frac{a_{k+i}}{i}\right|>\lambda\right\},$$
 for all $\lambda >0$, where $\mathcal{B}_n=\{-n, -(n-1), -(n-2),\dots , n-2, n-1, n\}$. Then there is a constant $C_2>0$ which does not depend on the sequence $\{a_j\}$ such that 
$$\sum_{n=1}^\infty\sharp\left\{k\in\mathbb{Z}:\left|\sum_{i=-n}^{n}\!\!\!\text{\raisebox{1.9ex}{\scriptsize$\prime$}}\;\frac{a_{k+i}}{i}\right|>\lambda\right\}\leq\frac{C_2}{\lambda}\sum_{i=-\infty}^{\infty}|a_i|$$
for all $\lambda>0$.
\end{thm}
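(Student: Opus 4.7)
The plan is to pass from the double sum $\sum_{n=1}^\infty \sharp\{k:|T_n a(k)|>\lambda\}$, where $T_n a(k)=\sum_{i=-n,\,i\neq 0}^{n} a_{k+i}/i$, to a single level-set count for a suitable maximal operator, and then invoke Lemma~\ref{weakinlemintmaxhil}. First, after reducing to the case that $a$ is a finitely supported nonnegative sequence (by splitting $a$ into its positive and negative parts and approximating by finite truncations), I would interchange the order of summation and write
$$\sum_{n=1}^\infty \sharp\{k:|T_n a(k)|>\lambda\}=\sum_{k\in\mathbb{Z}}\#\{n\geq 1:|T_n a(k)|>\lambda\},$$
so that the task becomes controlling the multiplicity $N_k:=\#\{n\geq 1:|T_n a(k)|>\lambda\}$ by a quantity summable in $k$ to $(C/\lambda)\|a\|_1$.

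Second, I would linearize the multiplicity by ordering, for each $k$, the successive indices $n_1(k)<n_2(k)<\cdots$ at which $|T_{n_j(k)}a(k)|>\lambda$, and then encoding these crossings into a single integer sequence $(t_n)$, independent of $k$, whose shifted partial sums $\sum_{i\in\mathcal{B}_n-t_n,\,i\neq 0} a_{k+i}/i$ witness the relevant crossings at every $k$ simultaneously. This is the form in which the hypothesis becomes usable: once the shift sequence is fixed, the hypothesis yields
$$\sharp\{k:\sup_{n\geq 1}|\textstyle\sum_{i\in\mathcal{B}_n-t_n,\,i\neq 0}a_{k+i}/i|>\lambda\}\leq C_1\,\sharp\{k:Ma(k)>\lambda\},$$
with $Ma(k):=\sup_n|T_n a(k)|$, and Lemma~\ref{weakinlemintmaxhil} bounds the right-hand side by $(C/\lambda)\|a\|_1$.

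Third, I would combine the linearization with the hypothesis, possibly applied at a geometrically decreasing sequence of levels $\lambda,\lambda/2,\lambda/4,\dots$ so as to convert the per-$k$ multiplicity into a telescoping sum of single level-set counts. Summing yields the claimed bound $\sum_{n=1}^\infty\sharp\{\cdots\}\leq (C_2/\lambda)\|a\|_1$ with $C_2$ depending only on $C_1$ and the absolute constant from Lemma~\ref{weakinlemintmaxhil}.

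The step I expect to be the main obstacle is precisely the construction of the single shift sequence $(t_n)$ that realizes the $k$-by-$k$ multiplicity count. The hypothesis is stated as a comparison of \emph{maximal} operators and hence produces only one level-set inequality per choice of $(t_n)$, whereas the conclusion asks for a genuine \emph{multiplicity} bound in $n$. Extracting such a summable count requires a covering or stopping-time argument matching the geometry of the asymmetric windows $\mathcal{B}_n-t_n$ to the level crossings of $T_n a(k)$, and one must carry it out without letting the constant $C_1$ from the hypothesis degenerate with the number of crossings; this combinatorial matching is where the main work will lie.
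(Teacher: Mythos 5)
There is a genuine gap, and it sits exactly where you say it does: you never construct the sequence $(t_n)$, and the devices you propose in its place (interchanging the order of summation to count multiplicities in $n$ for fixed $k$, then applying the hypothesis at levels $\lambda/2^j$ with a telescoping argument) cannot produce it. The hypothesis is a statement that, for one fixed sequence $(t_n)$ and one fixed level $\lambda$, compares the count in $k$ of a shifted maximal function with that of the unshifted one; it says nothing about how often, in $n$, the partial sums at a fixed $k$ cross the level, so the per-$k$ multiplicity $N_k$ cannot be fed into it. Moreover, summing weak-$(1,1)$ bounds over geometric levels gives $\sum_j C\,2^j\lambda^{-1}\|a\|_1$, which diverges, so the telescoping-by-levels idea cannot close the argument either. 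In short, your plan stops precisely at the step that constitutes the proof.

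The missing idea in the paper is a disjointification of the level sets by translation, and it makes the whole multiplicity issue disappear. Set $\mathcal{A}_n=\bigl\{k:\bigl|\sum_{i\in\mathcal{B}_n,\,i\neq 0}a_{k+i}/i\bigr|>\lambda\bigr\}$ and $\mathcal{A}=\bigl\{k:\sup_{n\geq 1}\bigl|\sum_{i\in\mathcal{B}_n,\,i\neq 0}a_{k+i}/i\bigr|>\lambda\bigr\}$. By Lemma~\ref{weakinlemintmaxhil} one has $\sharp\mathcal{A}<\infty$, and since $\mathcal{A}_n\subset\mathcal{A}$ each $\mathcal{A}_n$ is a finite, hence bounded, set; therefore one can choose $t_n$ inductively so that the translates $\mathcal{A}_n-t_n$ are pairwise disjoint (at each stage push $\mathcal{A}_n$ outside the finitely many previously placed translates). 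Disjointness gives $\sum_{n}\sharp\mathcal{A}_n=\sum_n\sharp(\mathcal{A}_n-t_n)=\sharp\bigcup_n(\mathcal{A}_n-t_n)$, and, with the paper's identification of $\mathcal{A}_n-t_n$ as the level set of the $n$-th sum taken over the shifted block $\mathcal{B}_n-t_n$, this union is contained in $\bigl\{k:\sup_{n\geq 1}\bigl|\sum_{i\in\mathcal{B}_n-t_n,\,i\neq 0}a_{k+i}/i\bigr|>\lambda\bigr\}$. A single application of the hypothesis with this particular $(t_n)$, followed by Lemma~\ref{weakinlemintmaxhil}, then yields $\sum_n\sharp\mathcal{A}_n\leq C_1\,\sharp\bigl\{k:\sup_{n}\bigl|\sum_{i\in\mathcal{B}_n,\,i\neq 0}a_{k+i}/i\bigr|>\lambda\bigr\}\leq C_2\lambda^{-1}\sum_i|a_i|$. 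So the hypothesis is used exactly once, at the level $\lambda$ itself, and the sequence $(t_n)$ is not required to ``witness crossings at every $k$ simultaneously''; it only has to separate the sets $\mathcal{A}_n$ from one another, which is elementary once Lemma~\ref{weakinlemintmaxhil} guarantees their finiteness. Your proposal does correctly identify the final ingredients (the hypothesis plus Lemma~\ref{weakinlemintmaxhil}), but without the disjointification step it does not reach them.
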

\begin{proof}Let us first define the integer block $\mathcal{B}_n=\{-n, -(n-1), -(n-2),\dots , n-2, n-1, n\}$ for each $n\in\mathbb{Z}$.  Let
$$\mathcal{A}_n=\left\{k\in\mathbb{Z}:\left|\sum_{i=-n}^{n}\!\!\!\text{\raisebox{1.9ex}{\scriptsize$\prime$}}\;\frac{a_{k+i}}{i}\right|>\lambda\right\}$$
and
$$\mathcal{A}=\left\{k\in\mathbb{Z}:\sup_{n\geq 1}\left|\sum_{i=-n}^{n}\!\!\!\text{\raisebox{1.9ex}{\scriptsize$\prime$}}\;\frac{a_{k+i}}{i}\right|>\lambda\right\}.$$
Then we have
$$\mathcal{A}_n\subset\mathcal{A}\;\;\textrm{for all}\;\; n\geq 1.$$
This imples that $\sharp\mathcal{A}_n\leq\sharp\mathcal{A}$ for all $n\geq 1$ and since $\sharp\mathcal{A}<\infty$ by Lemma~\ref{weakinlemintmaxhil} we see that $\sharp\mathcal{A}_n<\infty$ for all $n\geq 1$. This shows that $\mathcal{A}_n$ has finitely many elements for all $n\geq 1$ since $\sharp$ is the counting measure on $\mathbb{Z}$, and thus $\mathcal{A}_n$ is a bounded set for each $n\geq 1$.

Since $A_n$ is bounded, we can inductively select a sequence $t_n$ so that the translates $A_n-t_n$ are pairwise disjoint . 
 Note that the $A_n$ are intervals.  Move $A_n$ far away, outside of the union of $A_k-t_k$,  $k=1,\dots ,n-1$, and this way can have

$$(\mathcal{A}_n-t_n)\cap (\mathcal{A}_{n^\prime}-t_{n^\prime})=\phi\; \textrm{if}\;n\neq n^\prime .$$

$$\sharp (\mathcal{A}_n-t_n)=\sharp \mathcal{A}_n$$
we only need to prove that

$$\sum_{n=1}^\infty\sharp (\mathcal{A}_n-t_n)\leq\frac{C}{\lambda}\sum_{i=-\infty}^{\infty}|a_i|$$
for some constant $C$.\\
We now have
\begin{align*}
\sum_{n=1}^\infty\sharp (\mathcal{A}_n-t_n)&=\sum_{n=1}^\infty\sharp\left\{k\in\mathbb{Z}:\left|\sum_{i\in \mathcal{B}_n-t_n}\!\!\!\!\!\!\text{\raisebox{1.9ex}{\scriptsize$\prime$}}\;\frac{a_{k+i}}{i}\right|>\lambda\right\}\\
&=\sharp\bigcup_{n=1}^\infty \left\{k\in\mathbb{Z}:\left|\sum_{i\in \mathcal{B}_n-t_n}\!\!\!\!\!\!\text{\raisebox{1.9ex}{\scriptsize$\prime$}}\;\frac{a_{k+i}}{i}\right|>\lambda\right\}\\
&\leq \sharp\left\{k\in\mathbb{Z}:\sup_{n\geq 1}\left|\sum_{i\in \mathcal{B}_n-t_n}\!\!\!\!\!\!\text{\raisebox{1.9ex}{\scriptsize$\prime$}}\;\frac{a_{k+i}}{i}\right|>\lambda\right\}\\
&\leq C_1\sharp\left\{k\in\mathbb{Z}:\sup_{n\geq 1}\left|\sum_{i\in \mathcal{B}_n}\!\!\text{\raisebox{1.9ex}{\scriptsize$\prime$}}\;\frac{a_{k+i}}{i}\right|>\lambda\right\}\\
&\leq\frac{C_2}{\lambda}\sum_{i=-\infty}^{\infty}|a_i|      \;\;\;(\textrm{by Lemma~\ref{weakinlemintmaxhil}})
\end{align*}
as desired.
\end{proof}
Our second result is the following:
\begin{cor}
 Let $(X,\mathscr{B},\mu )$ be a measure space, $\tau :X\to X$ an invertible measure-preserving transformation, and suppose that $f\in L^1(X)$ such that for any sequence $(t_n)$ of integers there exists a constant $C_1>0$ such that
$$\mu\left\{ x: \sup_{n\geq 1}\left|\sum_{i\in \mathcal{B}_n-t_n}\!\!\!\!\!\!\text{\raisebox{1.9ex}{\scriptsize$\prime$}}\;\frac{f(\tau^ix)}{i}\right| >\lambda \right\}\leq C_1\mu\left\{x: \sup_{n\geq 1}\left|\sum_{i\in \mathcal{B}_n}\!\!\text{\raisebox{1.9ex}{\scriptsize$\prime$}}\;\frac{f(\tau^i x)}{i}\right|>\lambda \right\}  $$
for all $\lambda >0$, where $\mathcal{B}_n=\{-n, -(n-1), -(n-2),\dots , n-2, n-1, n\}$. Then
there exists a constant $C_2>0$ which does not depend on $f$ such that

$$\sum_{n=1}^\infty\mu\left\{x:\left|\sum_{i=-n}^{n}\!\!\!\,\text{\raisebox{1.9ex}{\scriptsize$\prime$}}\;\frac{f(\tau^ix)}{i}\right|>\lambda\right\}\leq\frac{C_2}{\lambda}\|f\|_1$$
for  all $\lambda >0$.
\end{cor}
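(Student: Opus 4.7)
The plan is to deduce this corollary from Theorem~\ref{mainthm} by Calder\'on's transference principle combined with a Fatou-type limiting argument. I fix a large positive integer $M$ (to be sent to infinity at the end) and, for each $x \in X$, introduce the truncated orbit sequence
$$a^{(x)}_k = f(\tau^k x)\,\mathbf{1}_{\{|k|\leq M\}}, \qquad k\in\mathbb{Z},$$
which has finite support and therefore lies in $\ell^1$. The essential pointwise identity is that for every $k$ with $|k|\leq M-n$,
$$\sum_{i=-n}^{n}\!\!\text{\raisebox{1.9ex}{\scriptsize$\prime$}}\;\frac{a^{(x)}_{k+i}}{i}=\sum_{i=-n}^{n}\!\!\text{\raisebox{1.9ex}{\scriptsize$\prime$}}\;\frac{f(\tau^{k+i}x)}{i},$$
and an analogous identity holds for the shifted blocks $\mathcal{B}_n - t_n$ whenever $|k|$ is small enough relative to $M$ and $|t_n|$. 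Denoting by $E_n$ the set on the left-hand side of the desired inequality (for the given index $n$), these identities say that for $|k|\leq M-n$, the index $k$ lies in the discrete level set of $a^{(x)}$ if and only if $\tau^k x\in E_n$.

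Next I would use this pointwise identity together with the ergodic hypothesis on $f$ and the measure-preserving property of $\tau$ to verify the discrete hypothesis of Theorem~\ref{mainthm} for $a^{(x)}$, at least in the averaged form that survives integration over $X$. Applying Theorem~\ref{mainthm} to $a^{(x)}$ then yields
$$\sum_{n=1}^\infty\sharp\!\left\{k:\left|\sum_{i=-n}^{n}\!\!\text{\raisebox{1.9ex}{\scriptsize$\prime$}}\;\frac{a^{(x)}_{k+i}}{i}\right|>\lambda\right\}\leq\frac{C_2}{\lambda}\sum_{|k|\leq M}|f(\tau^k x)|.$$
Integrating this inequality over $X$ against $d\mu$, using Tonelli to exchange the sum and the integral on the left, and applying the measure-preservation identity $\int_X|f(\tau^k x)|\,d\mu=\|f\|_1$, one obtains
$$\sum_{n=1}^{M}\bigl(2(M-n)+1\bigr)\,\mu(E_n)\;\leq\;\frac{C_2\,(2M+1)}{\lambda}\,\|f\|_1.$$
Dividing both sides by $2M+1$, letting $M\to\infty$, and noting that $(2(M-n)+1)/(2M+1)\to 1$ for each fixed $n$, Fatou's lemma for series produces the desired estimate $\sum_{n=1}^\infty\mu(E_n)\leq(C_2/\lambda)\|f\|_1$.

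The main obstacle is the verification that the discrete hypothesis of Theorem~\ref{mainthm} actually applies to the truncated orbit $a^{(x)}$, since the given ergodic hypothesis is a $\mu$-level statement on $X$, whereas the discrete hypothesis of Theorem~\ref{mainthm} is a pointwise (in $x$) statement about the counting measure on $\mathbb{Z}$. The cleanest resolution is to avoid applying Theorem~\ref{mainthm} as a black box and instead re-run its proof with each discrete counting inequality replaced by its $\mu$-integrated counterpart; the ergodic hypothesis then supplies exactly the inequality needed in the integrated version of the key step, and the ``edge'' indices $|k|$ close to $M$, of which there are only $O(n+|t_n|)$, are absorbed into the normalising factor $2M+1$ in the limit.
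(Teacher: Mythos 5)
Your transference computation is essentially the paper's own proof. The paper likewise fixes $x$, truncates the orbit to $a_k^K=f(\tau^kx)$ for $|k|\leq K$ and $0$ otherwise, applies the discrete complete-convergence inequality to this $\ell^1$ sequence for each fixed $x$, integrates over $X$ (phrased there as a computation of $\mu\times\sharp$ of the set $\bar{E}=\{(x,j):G_j^K(x)>\lambda\}$), uses measure preservation to extract a factor $(2K+1)\mu(E)$, and then divides by $2K+1$; your bookkeeping with the factors $2(M-n)+1$ versus $2M+1$ and the final Fatou passage is the same normalization, done somewhat more carefully than in the paper (which asserts $G_j\leq G_j^K$ for all $|j|\leq K$, where in fact one only has equality for $|j|\leq K-n$, exactly the edge issue you track).

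The place where you diverge is the obstacle you flag at the end, and you are right that it is genuine --- but note that the paper does not resolve it either: in the displayed chain it simply invokes the bound $\sum_{n}\sharp\{j:G_j^K(x)>\lambda\}\leq\frac{C}{\lambda}\sum_j|a_j^K|$ pointwise in $x$, i.e.\ it applies the conclusion of Theorem~\ref{mainthm} to the truncated orbit sequence without verifying the translated-block hypothesis for that sequence, and the ergodic hypothesis of the Corollary is never actually used in the computation. Your proposed repair (re-running the proof of Theorem~\ref{mainthm} with every counting inequality replaced by its $\mu$-integrated counterpart) does not go through as sketched: in that proof the shifts $t_n$ are chosen to disjointify the level sets $\mathcal{A}_n$ of the particular sequence, so for the sequences $a^{(x)}$ they would have to depend on $x$, whereas the Corollary's hypothesis is assumed only for shift sequences $(t_n)$ of fixed integers, independent of $x$ (and with a constant $C_1$ that may depend on $(t_n)$). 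So either one assumes, as the paper implicitly does, that the discrete hypothesis of Theorem~\ref{mainthm} holds for the orbit sequences, or a genuinely new idea is needed to bridge the pointwise hypothesis gap; the transference step itself, which is all the paper supplies, is fine and matches yours.
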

\begin{proof} The transference argument we are about use to proof our Corollary is the modification of the proof of Lemma 1 in K. Petersen~\cite{kpetersen} to our case. One can also directly apply a well known variant of the transfer principle of A. P.~Calder\'on~\cite{apcal} to Theorem~\ref{mainthm} to get the desired result.\\

 By considering $f^+$ and $f^-$ separately, we may assume that $f\geq 0$. We will show that
$$\sum_{n=1}^\infty\mu\left\{x:\left|\sum_{i=-n}^{n}\!\!\!\,\text{\raisebox{1.9ex}{\scriptsize$\prime$}}\;\frac{f(\tau^ix)}{i}\right|>\lambda\right\}\leq\frac{C}{\lambda}\|f\|_1,$$
where $C$ is a constant independent of $f$ and $\lambda$.

For fixed $x$ and $K$, let $a_k=f(\tau^kx)$ and
$$
a_k^K = \left\{ \begin{array}{ll}
a_k & \textrm{if $|k|\leq K$},\\
0 & \textrm{if $|k|>K$},
\end{array} \right.
$$
so that $\{a_k^K\}\in l^1$. For each $j\in\mathbb{Z}$, let
$$G_j(x)=\left|\sum_{k=-n}^n\:\!\!\!\!\text{\raisebox{1.9ex}{\scriptsize$\prime$}}\;\frac{a_{k+j}}{k}\right|,\quad\textrm{and}\quad G_j^K(x)=\left|\sum_{k=-n}^n\:\!\!\!\!\text{\raisebox{1.9ex}{\scriptsize$\prime$}}\;\frac{a_{k+j}^K}{k}\right|.$$

Then
\begin{align*} 
G_j(x)&=\left|\sum_{k=-n}^n\:\!\!\!\!\text{\raisebox{1.9ex}{\scriptsize$\prime$}}\;\frac{a_{k+j}^K}{k}+\frac{a_{k+j}-a_{k+j}^K}{k}\right|\\
&\leq G_j^K(x)+\left|\sum_{k=-n}^n\:\!\!\!\!\text{\raisebox{1.9ex}{\scriptsize$\prime$}}\;\frac{a_{k+j}-a_{k+j}^K}{k}\right|,
\end{align*}
so that $G_j(x)\leq G_j^K(x)$ for $|j|\leq K$.

Now let $E=\{x:G_0(x)>\lambda\}$, so that $\{x:G_j(x)>\lambda\}=\tau^{-j}E$. Let $\bar{E}=\left\{(x,j):G_j^K(x)>\lambda\right\}$. Then, if $\sharp$ continues to denote the counting measure on $\mathbb{Z}$,
\begin{align*}
\sum_{n=1}^\infty\mu\times\sharp{(\bar{E})}&=\int_X\sum_{n=1}^\infty\sharp\left\{j:G_j^K(x)>\lambda\right\}\,d\mu (x)\\
&\leq\int_X\frac{C}{\lambda}\sum_{j=-\infty}^{\infty}\left|a_j^K\right|\,d\mu\\
&\leq\int_X\frac{C}{\lambda}\sum_{-K}^{K}|a_j|\,d\mu\\
&\leq\frac{C}{\lambda}\left[2K+1\right]\|f\|_1,
\end{align*}
and also
\begin{align*}
\mu\times\sharp{(\bar{E})}&\geq\sum_{j=-K}^K\mu\left\{x:G_j^K(x)>\lambda\right\}\\
&\geq\sum_{j=-K}^K\mu\left\{x:G_j(x)>\lambda\right\}\\
&=\sum_{j=-K}^K\mu{\left(\tau^{-j}E\right)}\\
&=(2K+1)\mu{(E)}.
\end{align*}
Thus, we have
$$\sum_{n=1}^\infty\mu (E)\leq\frac{C}{\lambda}\|f\|_1$$
and this completes our proof.
\end{proof}

\end{document}